\newtheorem{thm}{Theorem}[section]
\newtheorem{cor}[thm]{Corollary}
\newtheorem{lem}[thm]{Lemma}
\newtheorem{exam}[thm]{Example}
\numberwithin{equation}{section}
\begin{document}

\title[the sum of two idempotents and a nilpotent]{Rings over which every matrix is the sum of two idempotents and a nilpotent}

\author{Huanyin Chen}
\author{Marjan Sheibani}
\address{
Department of Mathematics\\ Hangzhou Normal University\\ Hang -zhou, China}
\email{<huanyinchen@aliyun.com>}
\address{
Faculty of Mathematics\\Statistics and Computer Science\\  Semnan University\\ Semnan, Iran}
\email{<m.sheibani1@gmail.com>}

\subjclass[2010]{15A23, 15B33, 16S50, 16U99.} \keywords{Idempotent matrix, Nilpotent matrix, Matrix ring, 2-Nil-clean ring.}

\begin{abstract}
A ring $R$ is (strongly) 2-nil-clean if every element in $R$ is the sum of two idempotents and a nilpotent (that commute).
Fundamental properties of such rings are discussed. Let $R$ be a 2-primal ring. If $R$ is strongly 2-nil-clean, we show that
$M_n(R)$ is 2-nil-clean for all $n\in {\Bbb N}$. We also prove that the matrix ring is 2-nil-clean for a strongly 2-nil-clean ring of
bounded index. These provide many classes of rings over which
every matrix is the sum of two idempotents and a nilpotent.
\end{abstract}

\maketitle

\section{Introduction}

Throughout, all rings are associative with an identity. A ring is called ¡°(strongly) nil-clean¡± if every element can be written as the sum
of an idempotent and a nilpotent (that commute). A ring $R$ is weakly nil-clean provided that every element in $R$ is the sum or difference of a nilpotent element and an idempotent. Such rings have been the object of much
investigation over the last decade, as they are related to the well-studied clean
rings of Nicholson. Though nil and weakly clean rings are popular, the conditions a bit
restrictive (for example, there are even fields which are not weakly nil clean). The subjects of nil-clean and weakly nil-clean rings are interested for so many mathematicians, e.g., ~\cite{BD,CH,DW,D,K,KWZ,KW} and ~\cite{ST}. In the current paper, we seek to remedy this by looking at an interesting
generalization of nil and weakly nil cleanness, which they call ¡°2-nil-clean¡±. That is, a ring $R$ is (strongly) 2-nil-clean provided that every element in $R$ is the sum of two idempotents and a nilpotent (that commute). This new class enjoys many interesting
properties and examples (for example, all tripotent rings are 2-nil-clean). We shall investigate when a matrix ring is 2-nil-clean, i.e.,
when every matrix over a ring can be written as the sum of two idempotents and a nilpotent. A ring $R$ is 2-primal if its prime radical coincides with the set of nilpotent elements of the ring. Examples of 2-primal rings include commutative rings and reduced rings. Let $R$ be a 2-primal ring. If $R$ is strongly 2-nil-clean, we show that $M_n(R)$ is 2-nil-clean for all $n\in {\Bbb N}$. A ring $R$ is of bounded index if there is a
positive integer $n$ such that $a^n=0$ for each nilpotent element $a$ of
$R$. We also prove that the matrix ring is 2-nil-clean for a strongly 2-nil-clean ring of
bounded index. These provide many classes of rings over which
every matrix is the sum of two idempotents and a nilpotent.

We use $N(R)$ to denote the set of all nilpotent elements in $R$ and $J(R)$ the Jacobson radical of $R$.
${\Bbb N}$ stands for the set of all natural numbers.

\section{Examples and Subclasses}

\vskip4mm The aim of this section is to construct examples of 2-nil-clean rings and investigate certain subclass of such rings. We begin with

\begin{exam} The class of 2-nil-clean rings contains many familiar examples.\end{exam}
\begin{enumerate}
\item [(1)] Every weakly nil-clean ring is 2-nil-clean, e.g., strongly nil-clean rings, nil-clean rings, Boolean rings, weakly Boolean rings.
\item [(2)] ${\Bbb Z}_3\times {\Bbb Z}_3$ is 2-nil-clean, while it is not weakly nil-clean.
\item [(3)] A local ring $R$ is 2-nil-clean if and only if $R/J(R)\cong {\Bbb Z}_2$ or ${\Bbb Z}_3$, and $J(R)$ is nil.
\end{enumerate}

We also provide some examples illustrating which ring-theoretic extensions of 2-nil-clean rings produce 2-nil-clean rings.

\begin{exam} \end{exam}
\begin{enumerate}
\item [(1)] Any quotient of a 2-nil-clean ring is 2-nil-clean.
\item [(2)] Any finite product of 2-nil-clean rings is 2-nil-clean. But $R={\Bbb Z}_2\times {\Bbb Z}_4\times {\Bbb Z}_8\times $ is an infinite product of 2-nil-clean rings, which is not 2-nil-clean. Here, the element $(0,2,2,2,\cdots )\in R$ can not written as the sum of two idempotents and a nilpotent element.
\item [(3)] The triangular matrix ring $T_n(R)$ over a 2-nil-clean ring $R$ is 2-nil-clean.
\item [(4)] The quotient ring $R[[x]]/(x^n) (n\geq 1)$ of a 2-nil-clean ring $R$ is 2-nil-clean.
\end{enumerate}

\begin{thm} Let $I$ be a nil ideal of the ring $R$. Then $R$ is 2-nil-clean if and only if the quotient ring $R/I$ is 2-nil-clean.\end{thm}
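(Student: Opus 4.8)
The plan is to prove the two implications separately, with the forward direction being essentially immediate and the reverse direction resting on the classical fact that idempotents lift modulo a nil ideal. For the ``only if'' direction there is nothing new to do: $R/I$ is a quotient of $R$, and any quotient of a 2-nil-clean ring is again 2-nil-clean (this is recorded earlier as Example~2.2(1)), so $R/I$ is 2-nil-clean whenever $R$ is, regardless of whether $I$ is nil.

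For the ``if'' direction I would argue elementwise. Fix $a\in R$ and pass to $\bar a=a+I$ in $R/I$. Since $R/I$ is 2-nil-clean, write $\bar a=\bar e+\bar f+\bar w$ with $\bar e,\bar f$ idempotents and $\bar w$ nilpotent in $R/I$. The crucial step is to replace $\bar e$ and $\bar f$ by genuine idempotents of $R$ lying over them. Here I would invoke the standard lemma that idempotents lift modulo a nil ideal: because $I$ is nil, there exist idempotents $e,f\in R$ with $e+I=\bar e$ and $f+I=\bar f$. Note that the two liftings are carried out independently and that no commutativity is demanded, which is exactly why this works for the non-strong notion.

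It then remains to verify that the leftover $w=a-e-f$ is nilpotent in $R$. Reducing modulo $I$ gives $\overline{w}=\bar a-\bar e-\bar f=\bar w$, which is nilpotent in $R/I$; hence $w^k\in I$ for some $k$. Since $I$ is nil, $w^k$ is nilpotent, so $w^{km}=0$ for some $m$, that is, $w$ is itself nilpotent. Consequently $a=e+f+w$ exhibits $a$ as the sum of two idempotents and a nilpotent, and as $a$ was arbitrary, $R$ is 2-nil-clean.

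The only nontrivial ingredient is the idempotent-lifting lemma, and this is where I expect whatever real content there is to sit; the passage from ``$w^k\in I$'' to ``$w$ nilpotent'' is pure bookkeeping using niltude of $I$. I would flag as the sole point needing care that this argument is genuinely specific to the ordinary 2-nil-clean property: the \emph{strong} analogue would require lifting \emph{commuting} idempotents together with a commuting nilpotent, which does not follow from the plain lifting lemma and would demand a more delicate construction.
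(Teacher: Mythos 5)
Your proposal is correct and follows essentially the same route as the paper's own proof: the forward direction by the quotient observation (Example~2.2(1)), and the reverse direction by lifting the two idempotents modulo the nil ideal $I$ and checking that the remainder $w=a-e-f$ is nilpotent because some power of it lands in $I$. The only cosmetic difference is that the paper also ``lifts'' the nilpotent and then absorbs an error term $r\in I$ into it, whereas you fold that bookkeeping into a single step; the mathematical content is identical.
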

\begin{proof} $\Longrightarrow$ It is obtained from Example 2.2 (1).

$\Longleftarrow $ Let $a\in R$, there exist two idempotents $\overline{e}, \overline{f}\in R/I$ and a nilpotent $\overline{w}\in R/I$ such that $\overline{a}=\overline{e}+\overline{f}+\overline{w}$. As idempotents and nilpotents lift modulo nil ideal, we can assume that $e, f$ are idempotents in $R$ and $w$ is a nilpotent in $R$. Then $a=e+f+w+r$ for some $r\in I$. Since $w\in N(R)$, we may assume that $w^k=0$ for some $k\in {\Bbb N}$, this implies that $(w+r)^k\in I$ and so $w+r\in N(R)$. This completes the proof.\end{proof}

We use $P(R)$ to denote the prime radical of a ring $R$. That is, $P(R)=\bigcap \{ P~|~P~\mbox{is a prime ideal of}~R\}$. We have

\begin{cor} A ring $R$ is 2-nil-clean if and only if the quotient ring $R/P(R)$ is 2-nil-clean.\end{cor}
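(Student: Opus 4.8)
The plan is to derive this corollary directly from Theorem 2.3 by recognizing that $P(R)$ is a nil ideal of $R$. Recall that the prime radical $P(R)$ is always a nil ideal: every element of $P(R)$ is nilpotent, since $P(R)$ is the intersection of all prime ideals and in any ring the prime radical is contained in the set of nilpotent elements (indeed $P(R)$ equals the set of strongly nilpotent elements). Thus I would first observe that $P(R)$ is a nil ideal of $R$.

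Once this observation is in place, the corollary follows immediately by applying Theorem 2.3 with the choice $I = P(R)$. Theorem 2.3 states that for any nil ideal $I$ of $R$, the ring $R$ is 2-nil-clean if and only if $R/I$ is 2-nil-clean. Taking $I = P(R)$ yields exactly the desired equivalence: $R$ is 2-nil-clean if and only if $R/P(R)$ is 2-nil-clean.

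The only point requiring any care is justifying that $P(R)$ is nil, but this is a standard fact in ring theory and requires no new argument specific to 2-nil-clean rings. There is no real obstacle here; the corollary is essentially a specialization of the preceding theorem. I would simply state that since $P(R)$ is a nil ideal, the result is an immediate consequence of Theorem 2.3.
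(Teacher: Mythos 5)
Your proposal is correct and follows exactly the same route as the paper: note that $P(R)$ is a nil ideal and apply Theorem 2.3 with $I=P(R)$. The additional justification you give for nilness of the prime radical (every element is strongly nilpotent) is a standard fact the paper leaves implicit.
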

\begin{proof} As $P(R)$ is a nil ideal of $R$, the result follows from Theorem 2.3.\end{proof}

\begin{cor} Let
$R$ be a ring. Then the following are equivalent:\end{cor}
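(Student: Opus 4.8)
The plan is to read the equivalence off Theorem 2.3, with the Jacobson radical playing the role of the distinguished nil ideal. One implication is formal: if $R/J(R)$ is 2-nil-clean and $J(R)$ is nil, then applying Theorem 2.3 with $I = J(R)$ yields that $R$ is 2-nil-clean, since idempotents and nilpotents lift modulo the nil ideal $J(R)$. The forward direction of the quotient condition is equally formal: $R/J(R)$ is a homomorphic image of $R$, hence 2-nil-clean by Example 2.2(1). Thus the entire burden of the statement is the assertion that the Jacobson radical of a 2-nil-clean ring is nil.

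To establish that $J(R)$ is nil I would fix $x \in J(R)$, write $x = e + f + w$ with $e, f$ idempotent and $w$ nilpotent, and try to show $x \in N(R)$. The computational device I would use is the pair $p = e + f - 1$ and $q = e - f$, for which a direct expansion gives the relations $p^2 + q^2 = 1$ and $pq + qp = 0$, so that $q^2 = 1 - p^2$. Passing to $\overline{R} = R/J(R)$, the identity $\overline{x} = 0$ becomes $\overline{p} = -(1 + \overline{w})$; hence $\overline{p}$ is a unit (because $\overline{w}$ is nilpotent) and $\overline{q}^2 = 1 - \overline{p}^2 = -\overline{w}(2 + \overline{w})$ is nilpotent, so $\overline{q}$ is nilpotent. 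This shows that, modulo $J(R)$, the two idempotents $e$ and $f$ agree up to a nilpotent, and it reduces the problem to converting this modular information into honest nilpotency of $x$ inside $R$.

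The main obstacle is precisely this last conversion. Because $e$ and $f$ need not commute, one cannot diagonalize them simultaneously, and after matching them up to nilpotents there remains a ``characteristic $2$'' term of the shape $2e$ that must be forced into $N(R)$. I would attack it through the Peirce decomposition relative to $e$: each Peirce component of $x$ again lies in the Jacobson radical of the corresponding corner ring $eRe$ or $(1-e)R(1-e)$, and in those corners the surviving $2$-torsion on the idempotent part can be shown to drive $x$ into the nilradical. Reducing first modulo $P(R)$ by Corollary 2.4, so that $R$ may be assumed semiprime (whence a nil ideal must vanish), should considerably streamline this bookkeeping. Once $J(R)$ is known to be nil, the listed conditions are seen to be equivalent.
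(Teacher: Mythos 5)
Your proposal proves a different statement from the one in the paper. The conditions of this corollary (listed immediately after it in the source) are: (1) $R$ is 2-nil-clean; (2) $T_n(R)$ is 2-nil-clean for all $n\in {\Bbb N}$; (3) $T_n(R)$ is 2-nil-clean for some $n\in {\Bbb N}$ --- an equivalence about triangular matrix rings, with no mention of the Jacobson radical. The paper's proof is short and does not resemble yours: for $(1)\Rightarrow (2)$ one takes $I\subseteq T_n(R)$ to be the ideal of strictly upper triangular matrices, which satisfies $I^n=0$ and hence is nil; since $T_n(R)/I\cong R\times R\times \cdots \times R$ is 2-nil-clean by Example 2.2(2), Theorem 2.3 lifts 2-nil-cleanness from the quotient to $T_n(R)$. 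The implication $(2)\Rightarrow (3)$ is trivial, and $(3)\Rightarrow (1)$ holds because $R$ is a homomorphic image of $T_n(R)$, so Example 2.2(1) applies. The role of the ``distinguished nil ideal'' in Theorem 2.3 is thus played by the strictly upper triangular matrices, not by $J(R)$.

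Even judged as a proof of the equivalence you formulated, the argument has an unfilled gap at exactly the point you flag: the assertion that $J(R)$ is nil for every 2-nil-clean ring is never established. Your computation with $p=e+f-1$, $q=e-f$ only yields information modulo $J(R)$, and the Peirce-decomposition plan for converting that into nilpotency of $x$ in $R$ is a sketch, not an argument; nothing guarantees the corner rings $eRe$ inherit 2-nil-cleanness, and the parenthetical step ``semiprime, whence a nil ideal must vanish'' is not valid in general noncommutative rings (semiprimeness excludes nonzero \emph{nilpotent} ideals, not nil ones; the prime radical and upper nilradical can differ). It is telling that the paper itself never claims $J(R)$ is nil for 2-nil-clean rings: in Theorem 2.8, ``$J(R)$ is nil'' appears as an independent hypothesis alongside 2-nil-cleanness and quasi-duo-ness, and only for \emph{strongly} 2-nil-clean rings is the nilness of $J(R)$ derived (Lemma 2.7). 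So both the target statement and the method diverge from the paper, and the proposed proof is incomplete on its own terms.
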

\begin{enumerate}
\item [(1)] {\it $R$ is 2-nil-clean.}
\vspace{-.5mm}
\item [(2)] {\it $T_n(R)$ is 2-nil-clean for all $n\in {\Bbb N}$.}
\vspace{-.5mm}
\item [(3)] {\it
$T_n(R)$ is 2-nil-clean for some $n\in {\Bbb N}$.} \vspace{-.5mm}
\end{enumerate} \begin{proof}  $(1)\Rightarrow (2)$ Let $I=\{ \left(
\begin{array}{cccc}
0&a_{12}&\cdots& a_{1n}\\
0&0&\cdots&a_{2n}\\
\vdots&\vdots&\ddots&\vdots\\
0&0&\cdots&0
\end{array}
\right)~|~a_{ij}\in R\}$. Then $I$ is an ideal of $T_n(R)$. Clearly, $T_n(R)/I\cong R\times R\times \cdots \times R$. In light of Example 2.2 and Theorem 2.3, we show that $T_n(R)$ is 2-nil-clean.

$(2)\Rightarrow (3)$ is trivial.

$(3)\Rightarrow (1)$ Straightforward.\end{proof}

A ring $R$ is tripotent if $a^3=a$ for all $a\in R$. We have

\begin{lem} Let $R$ be a ring. Then the following are equivalent:\end{lem}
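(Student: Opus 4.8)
The plan is to treat the listed conditions as a cycle and to route every implication through the rigid structure that the identity $a^{3}=a$ imposes. I would open by recording two facts about a tripotent ring $R$ that get used throughout: by Jacobson's commutativity theorem the identity $a^{3}=a$ forces $R$ to be commutative, and $R$ is reduced, since $a^{n}=0$ together with $a=a^{3^{k}}$ (for every $k$) gives $a=0$ once $3^{k}\geq n$. Reducedness matters because it makes all idempotents central, so the word ``commute'' in the (strongly) 2-nil-clean conditions becomes automatic, and the 2-nil-clean and strongly 2-nil-clean hypotheses collapse into one.

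Next I would fix the characteristic and split the ring. Expanding $(1+1)^{3}=1+1$ yields $6=0$, and then $3^{2}=3$ and $(-2)^{2}=-2$ are orthogonal central idempotents with $3+(-2)=1$. Writing $e=3$ gives $R\cong eR\times(1-e)R=:R_{1}\times R_{2}$, where $2=0$ on $R_{1}$ and $3=0$ on $R_{2}$. Since the only domain satisfying $x^{3}=x$ in characteristic $2$ is ${\Bbb Z}_{2}$ and in characteristic $3$ is ${\Bbb Z}_{3}$, a standard subdirect-product argument identifies $R_{1}$ as a Boolean ring and $R_{2}$ as a subdirect product of copies of ${\Bbb Z}_{3}$. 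This decomposition is the structural heart of the lemma and the tool that makes the remaining implications routine.

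To reach the 2-nil-clean conditions I would argue that, $R$ being reduced, any nilpotent summand is $0$, so it suffices to write each element as a sum of two idempotents. On the Boolean factor I take $a=a+0$. On $R_{2}$ the key point is that $a-a^{2}$ is already idempotent: using $a^{3}=a$ and $3=0$ one computes $(a-a^{2})^{2}-(a-a^{2})=3(a^{2}-a)=0$, so $a=a^{2}+(a-a^{2})$ exhibits $a$ as a sum of two idempotents. Assembling the two factors, every element of $R=R_{1}\times R_{2}$ is a sum of two (commuting) idempotents, which is the required 2-nil-clean form with zero nilpotent part. For the converse, I would start from ``$R$ reduced and 2-nil-clean'', note that reducedness makes $R$ commutative (a sum of central idempotents is central), hence a subdirect product of domains, and observe that in a domain the only idempotents are $0$ and $1$, so a sum of two idempotents lies in $\{0,1,2\}$; a reduced 2-nil-clean domain therefore has at most three elements and must be ${\Bbb Z}_{2}$ or ${\Bbb Z}_{3}$. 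As both satisfy $x^{3}=x$, the subdirect product $R$ is tripotent.

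The step I expect to require the most care is this converse collapse: verifying that, once nilpotents are excluded, being a sum of two idempotents is strong enough to force each subdirectly irreducible quotient to be ${\Bbb Z}_{2}$ or ${\Bbb Z}_{3}$, and checking that the idempotents produced in the forward construction are genuine idempotents of $R$ rather than merely of the factors $R_{1},R_{2}$. The earlier results in this section, in particular that any quotient of a 2-nil-clean ring is again 2-nil-clean (Example 2.2(1)), let me pass freely to the subdirect factors without losing the hypothesis.
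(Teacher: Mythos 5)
Your forward direction is correct and in fact more self-contained than the paper's: where the paper simply cites Hirano--Tominaga's Theorem 1 for the equivalence of tripotency with ``commutative and every element is a sum of two idempotents,'' you derive commutativity from Jacobson's theorem, split $R$ by the central idempotents $3$ and $-2=4$ into a characteristic-$2$ and a characteristic-$3$ part, and exhibit explicit idempotents ($a=a+0$ on the Boolean factor, $a=a^2+(a-a^2)$ on the characteristic-$3$ factor; your computation $(a-a^2)^2-(a-a^2)=3(a^2-a)$ is right). Your structural conclusion (Boolean times a subdirect product of copies of ${\Bbb Z}_3$) is essentially what the paper extracts from Birkhoff's theorem in its proof of $(1)\Rightarrow(3)$; note that neither you nor the paper actually upgrades ``subdirect product'' to the full ``product of fields'' asserted in condition $(3)$, so that defect is not yours alone.

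The genuine gap is in your converse, and it comes from guessing the wrong middle condition. The lemma's condition $(2)$ is ``$R$ is a \emph{commutative} ring in which every element is the sum of two idempotents''; it does not give you reducedness. Your converse starts from ``$R$ reduced and 2-nil-clean'' and uses reducedness twice in an essential way: to make idempotents central (hence $R$ commutative) and to decompose $R$ as a subdirect product of \emph{domains}. From commutativity alone, the minimal-prime/domain decomposition is unavailable; a commutative non-reduced ring is only a subdirect product of subdirectly irreducible rings. So, as written, your argument proves that tripotency is equivalent to ``reduced and every element a sum of two idempotents,'' which is a true but different statement, and the implication $(2)\Rightarrow(1)$ of the lemma is left unproved. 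The repair is either the paper's device --- pass to the Birkhoff factors, observe that a subdirectly irreducible commutative ring has no idempotents besides $0,1$, so every element there lies in $\{0,1,2\}$, forcing each factor to be ${\Bbb Z}_2$ or ${\Bbb Z}_3$, and pull the identity $x^3=x$ back to the subdirect product --- or a direct computation: writing $3=p+q$ and multiplying by $p$ gives $2p=pq$, squaring gives $2p=0$, hence $pq=0$ and $9=3$, so $6=0$; then for any $a=e+f$ one has $a^3=e+f+6ef=a$, with no decomposition needed.
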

\vspace{-.5mm}
\begin{enumerate}
\item [(1)]{\it $R$ is tripotent.}
\vspace{-.5mm}
\item [(2)]{\it $R$ is a commutative ring in which every element is the sum of two idempotents.}
\vspace{-.5mm}
\item [(3)]{\it $R$ is the product of fields isomorphic to ${\Bbb Z}_2$ or ${\Bbb Z}_3$.}
\end{enumerate}
\begin{proof} $(1)\Longleftrightarrow (2)$  This is obvious, by ~\cite[Theorem 1]{HT}.

$(1)\Rightarrow (3)$ Birkhoff¡¯s Theorem, $R$ is isomorphic to a subdirect product of
subdirectly irreducible rings $R_i$. Thus, $R_i$ satisfies the identity $x^3=x$. In view of ~\cite[Theorem 1]{HT}, $R_i$ is commutative.
But $R_i$ has no central idempotents except for $0$ and $1$. Thus,
$x^2=0$ or $x^2=1$. Hence, $x=x^3=0$ or $x^2=1$. If $x\neq 0,1$, then $(x-1)^2=1$, and so $x(x-2)=0$. This implies that $x=2$.
Thus, $R_i\cong {\Bbb Z}_2$ or ${\Bbb Z}_3$, as desired.

$(3)\Rightarrow (1)$ $R$ is the product of fields isomorphic to ${\Bbb Z}_2$ or ${\Bbb Z}_3$. As ${\Bbb Z}_2$ and
${\Bbb Z}_3$ satisfy the identity $x^3=x$. This completes the proof.\end{proof}

Clearly, strongly 2-nil-clean rings form a subclass of 2-nil-clean rings. For further use, we now consider strongly 2-nil-clean rings. We record the following.

\begin{lem} A ring $R$ is strongly 2-nil-clean if and only if\end{lem}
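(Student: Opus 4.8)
Reading the statement as the assertion that $R$ is strongly $2$-nil-clean if and only if $N(R)$ is an ideal of $R$ and $a-a^{3}\in N(R)$ for every $a\in R$ (equivalently, $N(R)$ is an ideal and $R/N(R)$ is tripotent; equivalently still, $J(R)$ is nil and $R/J(R)$ is tripotent), the plan is to prove the two implications separately, using Lemma 2.6 as the bridge to the tripotent structure.

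For the forward implication I would start from a strong decomposition $a=e+f+w$, where $e^{2}=e$, $f^{2}=f$, $w\in N(R)$, and all three pairwise commute. Setting $t=e+f$ and using $ef=fe$, a short computation gives $t^{2}=t+2ef$ and $t^{3}=t+6ef$, so that $t^{3}-3t^{2}+2t=0$, i.e. $t(t-1)(t-2)=0$. Since $w$ commutes with $t$ and is nilpotent, substituting $a=t+w$ into $x(x-1)(x-2)$ leaves the vanishing term $t(t-1)(t-2)$ together with terms each carrying a factor of $w$, so $a(a-1)(a-2)\in N(R)$.

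The main obstacle is to upgrade this polynomial information to the two structural facts $N(R)\trianglelefteq R$ and $a-a^{3}\in N(R)$. The key intermediate step is to show that $R$ is abelian, i.e. that every idempotent is central; here the commuting clause in the definition is essential, and I would exploit it by running the strong decomposition on elements built from an idempotent $e$ and the square-zero nilpotents $er(1-e)$ and $(1-e)re$, forcing these off-diagonal parts to vanish. Granting abelianness, $N(R)$ is closed under addition and under multiplication by $R$, hence is an ideal; then $R/N(R)$ is reduced and each of its elements is a sum of two commuting idempotents, so $R/N(R)$ is commutative. Lemma 2.6 $((2)\Rightarrow(1))$ now gives that $R/N(R)$ is tripotent, whence $\overline{a}^{3}=\overline{a}$ and $a-a^{3}\in N(R)$.

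For the converse, assume $N(R)$ is an ideal and $a-a^{3}\in N(R)$ for every $a$. Then $R/N(R)$ is reduced and satisfies $\overline{x}^{3}=\overline{x}$, so by Lemma 2.6 it is commutative and a product of copies of ${\Bbb Z}_2$ and ${\Bbb Z}_3$; in particular $\overline{a}=\overline{a}^{2}+(\overline{a}-\overline{a}^{2})$ exhibits $\overline{a}$ as a sum of two idempotents of $R/N(R)$, both polynomials in $\overline{a}$. Since $N(R)$ is nil, idempotents lift, and I would lift $\overline{a}^{2}$ and $\overline{a}-\overline{a}^{2}$ to idempotents $e,f$ inside the commutative subring ${\Bbb Z}[a]\subseteq R$ (the lifts again being polynomials in $a$). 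Then $e,f$ commute with each other and with $a$, $\overline{e}+\overline{f}=\overline{a}$, and $w:=a-e-f\in N(R)$ commutes with $e$ and $f$, giving the desired strong decomposition $a=e+f+w$. I expect the abelian step to be the genuinely hard part; the converse is comparatively routine once the idempotents are lifted inside ${\Bbb Z}[a]$.
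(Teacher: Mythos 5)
Your converse direction is essentially sound, and is in fact a clean variant of the paper's: where the paper produces idempotents congruent to $-2a^2$ and $a+2a^2$ modulo $N(R)$ by citing \cite[Lemma 3.5]{KW}, you use $\overline{a}^2$ and $\overline{a}-\overline{a}^2$ (the latter is idempotent in $R/N(R)$ because tripotent rings satisfy $3(x^2-x)=0$) and lift inside ${\Bbb Z}[a]$; both routes work. The forward direction, however, has a fatal gap: your ``key intermediate step,'' that every strongly 2-nil-clean ring is abelian, is \emph{false}. The ring $T_2({\Bbb Z}_2)$ of upper triangular $2\times 2$ matrices over ${\Bbb Z}_2$ is strongly nil-clean (every element is an idempotent plus a commuting strictly upper triangular nilpotent, as one checks on all eight elements), hence strongly 2-nil-clean by adding the idempotent $0$; yet $e_{11}$ is not central, since $e_{11}e_{12}=e_{12}\neq 0=e_{12}e_{11}$. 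So no manipulation of the decompositions of $er(1-e)$ and $(1-e)re$ can force idempotents to be central, and the plan collapses at its announced crux. Moreover, even granting abelianness, the inference ``abelian $\Rightarrow N(R)$ is an ideal'' is itself false in general: ${\Bbb Z}_2\langle x,y\rangle/(x^2)$ has only the trivial idempotents, yet $x$ is nilpotent while $xy$ is not.

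What actually closes the gap --- and what the paper does --- is a cited structural theorem, not abelianness. First, apply your identity $a(a-1)(a-2)\in N(R)$ to $a+1$ in place of $a$ (equivalently, the paper decomposes $a+1=e+f+w$ and writes $a=e-(1-f)+w$, a difference of commuting idempotents plus a commuting nilpotent, where $\big(e-(1-f)\big)^3=e-(1-f)$); this yields the cleaner statement $a^3-a\in N(R)$ for \emph{every} $a\in R$. Then \cite[Theorem A.1]{HTY} asserts that in any ring in which $a-a^3$ is nilpotent for all $a$, the set $N(R)$ is an ideal. That nontrivial external result is precisely the ingredient your argument is missing and cannot replace by the elementary considerations you sketch. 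Once $N(R)$ is known to be an ideal, the rest goes as you intend: $N(R)\subseteq J(R)$, every element of $R/J(R)$ is a sum of two commuting idempotents so $R/J(R)$ is tripotent by Lemma 2.6, and $J(R)$ is nil because $x\in J(R)$ gives $x(1-x^2)\in N(R)$ with $1-x^2$ a unit, whence $x\in N(R)$.
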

\begin{enumerate}
\item [(1)] {\it $J(R)$ is nil;}
\vspace{-.5mm}
\item [(2)] {\it $R/J(R)$ is tripotent.}
\end{enumerate}
\begin{proof}
$\Longrightarrow$ Let $a\in R$. Then we can find two idempotents $e,f\in R$ and a nilpotent $w\in R$ such that $a+1=e+f+w$ where $e,f$ and $w$ commute.
Hence, $a=e-(1-f)+w$. Clearly, $\big(e-(1-f)\big)^3=e-(1-f)$, we see that
$a^3-a\in N(R)$. It follows by~\cite[Theorem A.1]{HTY} that $N(R)$ forms an ideal of $R$. Hence, $N(R)\subseteq J(R)$. This shows that every element in $R/J(R)$ is the sum of two idempotents that commute. In view of Lemma 2.6, $R/J(R)$ is tripotent.
Let $x\in J(R)$. Then $x^3-x\in N(R)$ by the preceding discussion, Hence, $w:=x(1-x^2)\in N(R)$. This implies that $x=w(1-x^2)^{-1}\in N(R)$; hence,
$J(R)$ is nil, as desired.

$\Longleftarrow$ By hypothesis, $\overline{2}=\overline{2^3}$ in $R/J(R)$. Hence, $6\in J(R)$ is nil.
Let $a\in R$. Since $R/J(R)$ is tripotent, we see that $(a^2-a)-(a^2-a)^3, a^3-a\in N(R)$, and so $3a^2-3a\in N(R)$. This shows that $(-2a^2)^2-(-2a^2)=4a^4+2a^2
=(6a^4-2a^4)+2a^2=6a^4+2a(a-a^3)\in N(R)$.
Moreover, $(a+2a^2)^2-(a+2a^2)=a^2+4a^3+4a^4-a-2a^2=(3a^2-3a)+4(a^3-a)+6a+4a(a^3-a)\in N(R)$. In light of ~\cite[Lemma 3.5]{KW}, there exist $f(t),g(t)\in {\Bbb Z}[t]$ such that
$(-2a^2)-f(a), (a+2a^2)-g(a)\in N(R)$, $f(a)=f^2(a)$ and $g(a)=g^2(a)$.
Therefore $a-\big(f(a)+g(a)=\big((a+2a^2)-g(a)\big)+\big((-2a^2)-f(a)\big)\in N(R)$. Hence, $a=f(a)+g(a)+w$ with $w\in N(R)$.
One easily checks that $af(a)=f(a)a$ and $ag(a)=g(a)a$, and then
$f(a),g(a)$ and $w$ commute. Therefore $R$ is strongly 2-nil-clean, as asserted.\end{proof}

A ring $R$ a right (left) quasi-duo ring if every
maximal right (left) ideal of $R$ is an ideal. For instance, local
rings, duo rings and weakly right (left) duo rings are all right
(left) quasi-duo rings. Every abelian exchange ring is a right
(left) duo ring (cf. ~\cite{Yu}).

\begin{thm} A ring $R$ is strongly 2-nil-clean if and only if
\end{thm}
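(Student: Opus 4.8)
The plan is to recognize that the right-hand condition should amount to the single requirement that $a-a^{3}\in N(R)$ for every $a\in R$, and then to route the whole argument through Lemma 2.8, which already reduces strong $2$-nil-cleanness to the two conditions that $J(R)$ be nil and that $R/J(R)$ be tripotent. In other words, I would not prove the equivalence from scratch but instead show that the condition ``$a-a^{3}\in N(R)$ for all $a$'' is exactly the same as the Lemma 2.8 pair of conditions.

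For the forward implication I would argue directly: if $R$ is strongly $2$-nil-clean then, by Lemma 2.8, $R/J(R)$ is tripotent and $J(R)$ is nil; hence for any $a\in R$ we have $\overline{a}^{3}=\overline{a}$ in $R/J(R)$, so $a-a^{3}\in J(R)$, and since $J(R)$ is nil this gives $a-a^{3}\in N(R)$.

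For the converse I would proceed in three steps. First, and this is the crux, I would show that $N(R)$ is an ideal of $R$; this is precisely the point at which the cited structural result \cite[Theorem A.1]{HTY} is invoked, since the polynomial identity $a^{3}-a\in N(R)$ forces the nilpotents to be closed under sum and two-sided products. Once $N(R)$ is an ideal it is a nil ideal, so $N(R)\subseteq J(R)$. Second, I would prove the reverse containment: for $x\in J(R)$ the element $1-x^{2}$ is a unit commuting with $x$, and $x-x^{3}=x(1-x^{2})\in N(R)$ forces $x^{k}(1-x^{2})^{k}=0$ for some $k$, whence $x^{k}=0$ because $(1-x^{2})^{k}$ is a unit; thus $J(R)=N(R)$ is nil. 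Third, passing to $R/N(R)=R/J(R)$, every element now satisfies $\overline{a}^{3}=\overline{a}$, so $R/J(R)$ is tripotent. Lemma 2.8 then delivers that $R$ is strongly $2$-nil-clean.

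The main obstacle is the first step of the converse, namely deducing that $N(R)$ is an ideal purely from the identity $a-a^{3}\in N(R)$; the commutativity-type input \cite[Theorem A.1]{HTY} is what makes this work, while everything else reduces to elementary manipulation inside $J(R)$ and the application of Lemma 2.8.
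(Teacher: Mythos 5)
Your proposal proves a true statement, but not the one the paper asserts. Theorem 2.8 is completed by three conditions: (1) $R$ is 2-nil-clean; (2) $R$ is right (left) quasi-duo; (3) $J(R)$ is nil. You instead guessed that the right-hand side is the single condition that $a-a^{3}\in N(R)$ for every $a\in R$, and your argument for that equivalence is essentially sound --- but it is, in effect, a reorganization of the paper's proof of Lemma 2.7 (the implication from strong 2-nil-cleanness to $a^{3}-a\in N(R)$, the appeal to \cite[Theorem A.1]{HTY} to make $N(R)$ an ideal, the $x=w(1-x^{2})^{-1}$ trick showing $J(R)$ is nil, and the passage to a tripotent quotient), not a proof of Theorem 2.8. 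None of it engages the conditions that actually appear in the theorem.

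Concretely, two pieces of the paper's proof have no counterpart in yours. For the forward direction, one must show that a strongly 2-nil-clean ring is right (left) quasi-duo: since $R/J(R)$ is tripotent it is commutative, so for a maximal right ideal $M$ the image $M/J(R)$ is an ideal of $R/J(R)$, whence $rx\in M+J(R)\subseteq M$ for all $x\in M$, $r\in R$. For the converse, the paper starts from the weaker hypothesis that $R$ is 2-nil-clean (not strongly so), together with quasi-duo and $J(R)$ nil; the key inputs are \cite[Lemma 2.3]{Yu}, which puts every nilpotent of a quasi-duo ring inside $J(R)$, and lifting of idempotents modulo the nil ideal $J(R)$, which shows $fr(1-f)\in J(R)$ and hence that $R/J(R)$ is abelian; since its nilpotents also vanish, every element of $R/J(R)$ is a sum of two central idempotents, so $R/J(R)$ is commutative and tripotent by Lemma 2.6, and Lemma 2.7 finishes. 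Your proposal contains neither the quasi-duo verification nor the use of Yu's lemma, so as a proof of the stated theorem it has a genuine gap. (A minor point: the auxiliary lemma you cite as ``Lemma 2.8'' is Lemma 2.7 in the paper.)
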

\begin{enumerate}
\item [(1)] {\it $R$ is 2-nil-clean;}
\vspace{-.5mm}
\item [(2)] {\it $R$ is right (left) quasi-duo;}
\vspace{-.5mm}
\item [(3)] {\it $J(R)$ is nil.}
\end{enumerate}
\begin{proof} $\Longrightarrow$ $(1)$ is obvious. By Lemma 2.7, $R/J(R)$ is tripotent and then it is commutative. Let $M$ be a right (left) maximal ideal of $R$. Then $M/J(R)$ is an ideal of $R/J(R)$. Let $x\in M, r\in R$. Then $\overline{rx}\in M/J(R)$, and then $rx\in M+J(R)\subseteq M$. This shows that $M$ is an ideal of $R$. Thus $R$ is right (left) quasi-duo. $(3)$ is follows from Lemma 2.7.

$\Longleftarrow$ As $R$ is 2-nil-clean, $R/J(R)$ is 2-nil-clean. Since $R$ right (left) is quasi-duo, then by ~\cite[Lemma 2.3]{Yu}, every nilpotent in $R$ contains in $J(R)$. Let $e\in R/J(R)$ be an idempotent. As $J(R)$ is nil, we can find an idempotent $f\in R$ such that $e=f+J(R)$.
For any $r\in R$, $fr(1-f)\in J(R)$, and then $e\overline{r}=e\overline{r}e$. Likewise, $\overline{r}e=e\overline{r}e$. Thus, $e\overline{r}=\overline{r}e$, i.e., $R/J(R)$ is abelian. Hence, $R/J(R)$ is tripotent, by Lemma 2.6. As $J(R)$ is nil, it follows by Lemma 2.7 that $R$ is strongly 2-nil-clean.\end{proof}

A natural problem is if the matrix ring over a strongly 2-nil-clean ring is strongly 2-nil-clean. The answer is negative as the following shows.

\begin{exam} Let $n\geq 2$. then matrix ring $M_n(R)$ is not strongly 2-nil-clean for any ring $R$.
\end{exam}
\begin{proof} Let $R$ be a ring, and let $A=\left(
\begin{array}{cc}
1_R&1_R\\
1_R&0
\end{array}
\right)$. Then $A^3-A=\left(
\begin{array}{cc}
2&1_R\\
1_R&1_R
\end{array}
\right)$. One checks that $\left(
\begin{array}{cc}
2&1_R\\
1_R&1_R
\end{array}
\right)^{-1}=\left(
\begin{array}{cc}
1_R&-1_R\\
-1_R&2
\end{array}
\right)$, and so $A^3-A$ is not nilpotent. If $M_n(R)$ is strongly 2-nil-clean, as in the proof of Lemma 2.7, $A^3-A$ is nilpotent, a contradiction, and we are done.\end{proof}

\section{2-Nil-clean Matrix Rings}

In ~\cite[Corollary 1]{H}, Han and Nicholson proved that every matrix ring of a clean ring (i.e., every element is the sum of an idempotent and a unit) is clean.
By using a similar route, we easily see that every matrix over a 2-nil-clean ring is the sum of two idempotent matrices and an invertible matrix.
As seen in Example 2.9, there exist some matrices over an arbitrary strongly 2-nil-clean ring which is not strongly 2-nil-clean. The purpose of this section is to investigate certain strongly 2-nil-clean rings over which every matrix is 2-nil-clean. We have

\begin{lem} $M_n({\Bbb Z}_3)$ is 2-nil-clean.\end{lem}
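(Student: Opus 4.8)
The plan is to reduce to companion matrices and then peel off the eigenvalues that live in ${\Bbb Z}_3$, leaving only the blocks coming from higher-degree irreducible factors to worry about. Concretely, over the field ${\Bbb Z}_3$ every $A\in M_n({\Bbb Z}_3)$ is similar to a direct sum $\bigoplus_j C(q_j^{e_j})$ of companion matrices of its elementary divisors $q_j^{e_j}$ (primary rational canonical form). Since conjugation by an invertible matrix carries idempotents to idempotents and nilpotents to nilpotents, and since a block-diagonal matrix is the sum of two idempotents and a nilpotent as soon as each diagonal block is (assemble the blockwise idempotents and nilpotents into block-diagonal ones), it suffices to prove that each single companion matrix $C(q^e)$, with $q$ monic irreducible, is the sum of two idempotents and a nilpotent. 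Equivalently, I must produce idempotents $E,F\in M_m({\Bbb Z}_3)$, where $m=e\deg q$, with $C(q^e)-E-F$ nilpotent.

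First I would dispose of the linear case $\deg q=1$, say $q=x-\lambda$ with $\lambda\in\{0,1,2\}$. Then the characteristic polynomial of $C(q^e)-\lambda I$ is $x^e$, so $C(q^e)-\lambda I$ is nilpotent; writing $\lambda I$ as $0+0$, $I+0$, or $I+I$ (a sum of at most two copies of the idempotent $I$) exhibits $C(q^e)=\lambda I+\big(C(q^e)-\lambda I\big)$ as the sum of two idempotents and a nilpotent. After the reduction above, this already settles every matrix whose characteristic polynomial splits over ${\Bbb Z}_3$.

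The hard part is the case $\deg q=d\ge 2$, where $C(q^e)$ has no eigenvalue in ${\Bbb Z}_3$, so $C(q^e)-cI$ is invertible for every scalar $c$ and the cheap trick above is unavailable; here the two idempotents must genuinely fail to commute with $C(q^e)$. My approach would be to look for $E,F$ making $C(q^e)-E-F$ strictly lower triangular, hence nilpotent: since the only entries of a companion matrix on or above the diagonal sit in its last column, while the persistent subdiagonal $1$'s already form a nilpotent shift, I want $E+F$ to reproduce exactly the diagonal and last-column entries of $C(q^e)$, leaving the strictly lower part of $E+F$ free to be tuned so that $E+F$ is an honest sum of two idempotents. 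I expect this existence problem to be the main obstacle: one must build, uniformly in the degree $d$ and the exponent $e$, a pair of idempotents with a prescribed upper-triangular profile (a trace count modulo $3$ shows the diagonal constraint is at least consistent), and then verify that the resulting difference has characteristic polynomial $x^m$. The smallest instance, $q=x^2+1$ with $C(q)=\left(\begin{smallmatrix}0&2\\1&0\end{smallmatrix}\right)$, already forces a non-diagonal choice such as $F=I$ and $E=\left(\begin{smallmatrix}0&0\\0&1\end{smallmatrix}\right)$, for which $C(q)-E-F=\left(\begin{smallmatrix}2&2\\1&1\end{smallmatrix}\right)$ has trace and determinant $0$ and is therefore nilpotent; turning this kind of ad hoc computation into a construction valid for all irreducible $q$ of arbitrary degree is where the real work lies.
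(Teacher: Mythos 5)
Your reduction to companion matrices via the rational canonical form is sound and matches the first step of the paper's argument, but your proposal has an acknowledged gap exactly where the substance of the lemma lies: you never construct the two idempotents for a companion matrix $C(q^e)$ with $q$ irreducible of degree $d\geq 2$. What you offer there is a strategy (make $C(q^e)-E-F$ strictly lower triangular), a trace consistency check, and one $2\times 2$ example --- and you yourself write that carrying this out for all irreducible $q$ of arbitrary degree ``is where the real work lies.'' A proof that defers the real work is not a proof; moreover, the $2\times 2$ example you do give is not produced by your announced strategy (your difference $\left(\begin{smallmatrix}2&2\\1&1\end{smallmatrix}\right)$ is neither strictly lower triangular nor visibly the output of any general recipe), so there is no evidence the strategy scales.

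The idea you are missing is that one should not exploit the eigenvalue or irreducible-factor structure at all: the decomposition can be done uniformly for the companion matrix $C(f)$ of an \emph{arbitrary} monic $f$, by a case analysis on the single corner entry $c_{n-1}\in\{0,1,-1\}={\Bbb Z}_3$. Let $W$ be the lower shift (the subdiagonal of $1$'s), which is nilpotent, and let $E$ be the matrix whose last column is the last column of $C(f)$ and whose other columns are zero, so that $C(f)=E+W$ and $E^2=c_{n-1}E$. If $c_{n-1}=1$, then $E$ is idempotent and $C(f)=E+0+W$. If $c_{n-1}=-1$, then $-E$ is idempotent and, since $2=-1$ in ${\Bbb Z}_3$, we get $C(f)=(-E)+(-E)+W$. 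Only the case $c_{n-1}=0$ needs separate treatment (there $E$ is nilpotent but does not commute with $W$), and it is settled by explicit decompositions for $n=2$, $n=3$ and $n\geq 4$; for instance, for $n=2$,
$$\left(\begin{array}{cc} 0&c_0\\ 1&0\end{array}\right)=\left(\begin{array}{cc} 1&0\\ 0&1\end{array}\right)+\left(\begin{array}{cc} -1&1\\ 1&-1\end{array}\right)+\left(\begin{array}{cc} 0&c_0-1\\ 0&0\end{array}\right),$$
the middle matrix being idempotent over ${\Bbb Z}_3$. This covers every companion matrix, with reducible or irreducible characteristic polynomial alike, and in particular closes exactly the case you left open; your linear-factor analysis then becomes unnecessary rather than insufficient.
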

\begin{proof} As every
matrix over a field has a Frobenius normal form, and that 2-nil-clean matrix is invariant under the similarity, we may assume
that$$A=\left(
\begin{array}{cccccc}
0&&&&&c_0\\
1&0&&&&c_1\\
&1&0&&&c_2\\
&&&\ddots&&\vdots\\
&&&\ddots&0&c_{n-2}\\
&&&&1&c_{n-1}
\end{array}
\right).$$

Case I. $c_{n-1}=1$. Choose $$W=\left(
\begin{array}{cccccc}
0&&&&&0\\
1&0&&&&0\\
&1&0&&&0\\
&&&\ddots&&\vdots\\
&&&\ddots&0&0\\
&&&&1&0
\end{array}
\right), E=\left(
\begin{array}{cccccc}
0&&&&&c_0\\
0&0&&&&c_1\\
&0&0&&&c_2\\
&&&\ddots&&\vdots\\
&&&\ddots&0&c_{n-2}\\
&&&&0&1
\end{array}
\right).$$ Then $E^2=E$, and so $A=E+0+W$ is 2-nil-clean.

Case II. $c_{n-1}=-1$. Choose $$W=\left(
\begin{array}{cccccc}
0&&&&&0\\
1&0&&&&0\\
&1&0&&&0\\
&&&\ddots&&\vdots\\
&&&\ddots&0&0\\
&&&&1&0
\end{array}
\right), E=\left(
\begin{array}{cccccc}
0&&&&&c_0\\
0&0&&&&c_1\\
&0&0&&&c_2\\
&&&\ddots&&\vdots\\
&&&\ddots&0&c_{n-2}\\
&&&&0&-1
\end{array}
\right).$$ Then $E^2=-E$, and so $A=(I_2-E)+I_2+W$ is 2-nil-clean.

Case III. $c_{n-1}=0$.

If $n=2$, then $$\left(
\begin{array}{cc}
0&c_0\\
1&0
\end{array}
\right)=\left(
\begin{array}{cc}
1&0\\
0&1
\end{array}
\right)+\left(
\begin{array}{cc}
-1&1\\
1&-1
\end{array}
\right)+\left(
\begin{array}{cc}
0&c_0-1\\
0&0
\end{array}
\right)~\mbox{is 2-nil-clean}.$$

If $n=3$, then $$\left(
\begin{array}{ccc}
0&0&c_0\\
1&0&c_1\\
0&1&0
\end{array}
\right)=\left(
\begin{array}{ccc}
0&0&0\\
0&1&0\\
1&0&1
\end{array}
\right)+\left(
\begin{array}{ccc}
0&0&0\\
1&-1&1\\
-1&1&-1
\end{array}
\right)+\left(
\begin{array}{ccc}
0&0&c_0\\
0&0&c_1-1\\
0&0&0
\end{array}
\right)$$ is 2-nil-clean.

If $n\geq 4$, we have
$$\begin{array}{l}
A=\\
\left(
\begin{array}{ccccccc}
0&0&\cdots&0&0&0&0\\
0&0&\cdots&0&0&0&0\\
0&0&\cdots&0&0&0&0\\
\vdots&\vdots&\ddots&\vdots&\vdots&\vdots&\vdots\\
0&0&\cdots&0&0&0&0\\
0&0&\cdots&0&0&1&0\\
0&0&\cdots&0&1&0&1
\end{array}
\right)+
\left(
\begin{array}{ccccccc}
0&0&\cdots&0&0&0&0\\
0&0&\cdots&0&0&0&0\\
0&0&\cdots&0&0&0&0\\
\vdots&\vdots&\ddots&\vdots&\vdots&\vdots&\vdots\\
0&0&\cdots&0&0&0&0\\
0&0&\cdots&0&1&-1&1\\
0&0&\cdots&0&-1&1&-1
\end{array}
\right)\\
+\left(
\begin{array}{ccccccc}
0&0&\cdots&0&0&0&c_0\\
1&0&\cdots&0&0&0&c_1\\
0&1&\cdots&0&0&0&c_2\\
\vdots&\vdots&\ddots&\vdots&\vdots&\vdots&\vdots\\
0&0&\cdots&1&0&0&c_{n-3}\\
0&0&\cdots&0&0&0&c_{n-2}-1\\
0&0&\cdots&0&0&0&0
\end{array}
\right)
\end{array}$$ is the sum of two idempotents and a nilpotent. This implies that $A\in M_n({\Bbb Z}_3)$ is 2-nil-clean. Therefore $M_n({\Bbb Z}_3)$ is 2-nil-clean.\end{proof}

\begin{lem} Let $R$ be tripotent. Then $M_n(R)$ is 2-nil-clean for all $n\in {\Bbb N}$.\end{lem}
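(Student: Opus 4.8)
The plan is to reduce to the two prime cases via the structure theorem for tripotent rings and then to control the resulting (possibly infinite) product by a uniform bound on nilpotency indices. By Lemma 2.6, a tripotent ring $R$ is a product of fields isomorphic to ${\Bbb Z}_2$ or ${\Bbb Z}_3$, so $R\cong \prod_{\lambda\in\Lambda}F_\lambda$ with each $F_\lambda\in\{{\Bbb Z}_2,{\Bbb Z}_3\}$. Since matrices over a product decompose as $M_n\left(\prod_{\lambda}F_\lambda\right)\cong \prod_{\lambda}M_n(F_\lambda)$, it suffices to show that each factor is 2-nil-clean and that the factors can be assembled into a single decomposition.

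First I would note that each factor is 2-nil-clean: $M_n({\Bbb Z}_3)$ is 2-nil-clean by Lemma 3.1, while $M_n({\Bbb Z}_2)$ is nil-clean (a known fact about matrix rings over the Boolean field ${\Bbb Z}_2$) and hence 2-nil-clean, since any matrix there can be written as $E+0+W$ with $E$ idempotent and $W$ nilpotent. The crucial additional observation is a uniform bound: over any field $F$, a nilpotent matrix $W\in M_n(F)$ satisfies $W^n=0$ by Cayley--Hamilton, and this exponent $n$ depends neither on $F$ nor on $\lambda$.

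Given this, I would finish by direct assembly. For an element $(A_\lambda)_\lambda\in\prod_\lambda M_n(F_\lambda)$, choose for each $\lambda$ a decomposition $A_\lambda=E_\lambda+G_\lambda+W_\lambda$ with $E_\lambda,G_\lambda$ idempotent and $W_\lambda$ nilpotent. Then $(E_\lambda)_\lambda$ and $(G_\lambda)_\lambda$ are idempotents in the product, and $\left((W_\lambda)_\lambda\right)^n=(W_\lambda^n)_\lambda=0$, so $(W_\lambda)_\lambda$ is nilpotent; hence $(A_\lambda)_\lambda$ is the sum of two idempotents and a nilpotent, which gives the claim. The main obstacle is precisely this last step: a product of 2-nil-clean rings is in general not 2-nil-clean (Example 2.2 (2)), because the componentwise nilpotent parts may have unbounded index. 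What makes the argument go through here is that the structure theorem forces every factor to be a matrix ring of the fixed size $n$ over a field, where the uniform bound $W^n=0$ is available.
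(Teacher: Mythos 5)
Your assembly idea is sound and even elegant: idempotents in a product ring are exactly the componentwise idempotents, and since every nilpotent $W\in M_n(F)$ over a field satisfies $W^n=0$ by Cayley--Hamilton, componentwise decompositions of the factors do glue to a single decomposition in $\prod_\lambda M_n(F_\lambda)$. This correctly explains why the obstruction of Example 2.2 (2) (unbounded nilpotency indices across an infinite product) cannot occur here. You also rightly flag that $M_n({\Bbb Z}_2)$ needs separate treatment (via its known nil-cleanness), a point the paper itself passes over silently when it cites only Lemma 3.1 and Example 2.2 (2).

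The gap is in your very first step. You read Lemma 2.6 (3) as saying that an arbitrary tripotent ring is a \emph{direct} product of copies of ${\Bbb Z}_2$ and ${\Bbb Z}_3$. That is what the lemma literally says, but in that strength it is false, and the paper's own proof of Lemma 2.6 (via Birkhoff's theorem) establishes only a \emph{subdirect} product decomposition. Concretely, the Boolean ring of eventually constant sequences in ${\Bbb Z}_2^{\Bbb N}$ (equivalently, the finite and cofinite subsets of ${\Bbb N}$) is tripotent and countably infinite, whereas any product of copies of ${\Bbb Z}_2$ and ${\Bbb Z}_3$ is either finite or of cardinality at least $2^{\aleph_0}$; so this ring is not such a product. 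For such $R$ your reduction is unavailable, and with only a subdirect embedding $R\hookrightarrow\prod_\lambda F_\lambda$ your argument collapses: the componentwise pieces $E_\lambda, G_\lambda, W_\lambda$ you choose live in $\prod_\lambda M_n(F_\lambda)$ with no reason to lie in the subring $M_n(R)$. The paper circumvents exactly this difficulty by a local-to-finite reduction: given $A\in M_n(R)$, it passes to the subring $S$ of $R$ generated by the entries of $A$; since $R$ is commutative with $x^3=x$ (hence $6=0$), $S$ is finite, and a \emph{finite} tripotent ring genuinely is a finite product of ${\Bbb Z}_2$'s and ${\Bbb Z}_3$'s; then $M_n(S)$ is 2-nil-clean by Lemma 3.1 and Example 2.2 (2), and a decomposition of $A$ inside $M_n(S)$ is in particular one inside $M_n(R)$. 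If you graft this finite-subring step onto your argument the uniform-bound observation becomes unnecessary, since only finite products arise; as written, your proof is valid only for those tripotent rings that happen to be honest products of ${\Bbb Z}_2$'s and ${\Bbb Z}_3$'s.
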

\begin{proof} Let $A\in M_n(R)$, and
let $S$ be the subring of $R$ generated by the entries of $A$.
That is, $S$ is formed by finite sums of monomials of the form:
$a_1a_2\cdots a_m$, where $a_1,\cdots,a_m$ are entries of $A$. Since $R$ is a
commutative ring in which $6=0$, $S$
is a finite ring in which $x=x^3$ for all $x\in S$. By virtue of Lemma 2.6,
$S$ is isomorphic to finite direct
product of ${\Bbb Z}_2$ and/or ${\Bbb Z}_3$. In terms of Lemma 3.1 and Example 2.2 (2), $M_n(S)$ is
2-nil-clean. As $A\in M_n(S)$, $A$ is the sum of two idempotent matrices and a nilpotent matrix over $S$,
as desired.\end{proof}

\begin{thm} Let $R$ be 2-primal. If $R$ is strongly 2-nil-clean, then $M_n(R)$ is 2-nil-clean for all $n\in {\Bbb N}$.\end{thm}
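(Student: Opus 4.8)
The plan is to reduce $M_n(R)$ modulo its prime radical, identify the quotient with a matrix ring over a tripotent ring, and then invoke Lemma 3.2. The role of the $2$-primal hypothesis is to force the nil ideal one reduces by to be exactly the prime radical, whose behaviour under $M_n(-)$ is well understood.

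First I would determine the radicals of $R$. Since $R$ is strongly $2$-nil-clean, Lemma 2.7 gives that $J(R)$ is nil and that $R/J(R)$ is tripotent. As $J(R)$ is nil we have $J(R)\subseteq N(R)$; since $R$ is $2$-primal, $N(R)=P(R)$; and the prime radical, being a nil ideal, always satisfies $P(R)\subseteq J(R)$. Chaining these inclusions yields $J(R)=N(R)=P(R)$. In particular $R/P(R)$ coincides with $R/J(R)$ and is therefore tripotent.

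Next I would pass to matrices. From $P(M_n(R))=M_n(P(R))$ and the isomorphism $M_n(R)/M_n(P(R))\cong M_n\big(R/P(R)\big)$, the problem reduces to showing that $M_n\big(R/P(R)\big)$ is $2$-nil-clean. But $R/P(R)$ is tripotent by the previous step, so Lemma 3.2 applies verbatim and gives that $M_n\big(R/P(R)\big)$ is $2$-nil-clean for every $n$. Since $P(M_n(R))$ is the prime radical of $M_n(R)$, Corollary 2.4 (applied to $M_n(R)$) then lets me lift this back: $M_n(R)$ is $2$-nil-clean precisely because its quotient by $P(M_n(R))$ is, and we are done.

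The step I expect to carry the real weight is the passage between a nil ideal and the matrix ring. For an \emph{arbitrary} nil ideal $I$ the ring $M_n(I)$ need not be nil --- this is exactly the K\"othe-conjecture obstruction --- so one cannot in general reduce a matrix problem modulo $N(R)$. The $2$-primal hypothesis repairs this: it identifies $N(R)$ with the prime radical $P(R)$, and the prime radical commutes with $M_n(-)$, so that $M_n(P(R))=P(M_n(R))$ is automatically nil and Corollary 2.4 becomes available. Verifying this identification is the one place where $2$-primality is genuinely used; the remaining steps are routine applications of Lemmas 2.7 and 3.2.
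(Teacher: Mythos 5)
Your proof is correct and takes essentially the same route as the paper: both arguments identify $J(R)=N(R)=P(R)$ via Lemma 2.7 and $2$-primality, use $P(M_n(R))=M_n(P(R))$ together with $M_n(R)/M_n(P(R))\cong M_n\big(R/P(R)\big)$, and apply Lemma 3.2 to the tripotent quotient. The only cosmetic difference is that you cite Corollary 2.4 for the final lifting step where the paper applies Theorem 2.3 directly to the nil ideal $M_n(P(R))$, which is the same argument; your closing remark correctly pinpoints why $2$-primality is needed (so that the nil ideal one reduces by is the prime radical, which commutes with $M_n(-)$).
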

\begin{proof} Since $R$ is strongly 2-nil-clean, it follows by Lemma 2.7 that $J(R)$ is nil and $R/J(R)$ is tripotent.
In virtue of Lemma 3.2, $M_n(R/J(R))$ is 2-nil-clean. Furthermore,
$J(R)\subseteq N(R)=P(R)\subseteq J(R)$, we get $J(R)=P(R)$. Hence, $M_n(J(R))=M_n(P(R))=P(M_n(R))$ is nil.
Since $M_n\big(R/J(R)\big)\cong M_n(R)/M_n(J(R))$, it follows by Theorem 2.3 that $M_n(R)$ is 2-nil-clean. This completes the proof.\end{proof}

\begin{cor} Let $R$ be
a commutative 2-nil-clean ring. Then $M_n(R)$ is 2-nil-clean for all $n\in {\Bbb N}$.\end{cor}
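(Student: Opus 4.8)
The plan is to deduce this corollary directly from Theorem 3.3, whose hypotheses require $R$ to be both 2-primal and strongly 2-nil-clean. So the entire task reduces to checking that a commutative 2-nil-clean ring satisfies both of these conditions; once that is verified, Theorem 3.3 immediately yields that $M_n(R)$ is 2-nil-clean for every $n \in {\Bbb N}$.

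First I would verify that $R$ is 2-primal. In a commutative ring the set $N(R)$ of nilpotent elements is precisely the nilradical, which coincides with the intersection of all prime ideals, i.e. with the prime radical $P(R)$. Hence $N(R) = P(R)$, which is exactly the defining condition for a 2-primal ring; this is the same observation recorded in the introduction that commutative rings are 2-primal.

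Next I would upgrade the hypothesis from 2-nil-clean to strongly 2-nil-clean. Let $a \in R$ be arbitrary. By assumption $a = e + f + w$ with $e, f$ idempotents and $w$ nilpotent. Since $R$ is commutative, the three summands $e$, $f$ and $w$ automatically commute with one another, so this is already a strongly 2-nil-clean decomposition. As $a$ was arbitrary, $R$ is strongly 2-nil-clean.

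With both conditions in hand, I would simply invoke Theorem 3.3 to conclude that $M_n(R)$ is 2-nil-clean for all $n \in {\Bbb N}$. I do not anticipate any genuine obstacle: the only content is the two elementary observations above, and the commutativity of $R$ is precisely what trivializes the passage from 2-nil-clean to strongly 2-nil-clean, since it forces every decomposition into commuting parts. The one point worth stating carefully is the identification $N(R) = P(R)$, but for commutative rings this is entirely standard.
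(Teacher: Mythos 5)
Your proof is correct and is exactly the argument the paper intends: the corollary is stated without proof as an immediate consequence of Theorem 3.3, relying on precisely your two observations that a commutative ring is 2-primal ($N(R)=P(R)$) and that commutativity makes any 2-nil-clean decomposition automatically a strongly 2-nil-clean one. You have simply made explicit what the paper leaves implicit.
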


\begin{cor} Let $R$ be a commutative weakly nil-clean ring. Then $M_n(R)$ is 2-nil-clean for all $n\in {\Bbb N}$.\end{cor}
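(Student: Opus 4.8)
The plan is to deduce this from Corollary 3.4, reducing the weakly nil-clean hypothesis to the 2-nil-clean hypothesis already treated there. First I would invoke Example 2.1 (1), which records that every weakly nil-clean ring is 2-nil-clean; hence the commutative weakly nil-clean ring $R$ is in particular a commutative 2-nil-clean ring. At that point Corollary 3.4 applies verbatim and yields that $M_n(R)$ is 2-nil-clean for all $n \in {\Bbb N}$, finishing the argument in essentially one line.

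For completeness I would also trace why Corollary 3.4 governs this situation, since that is where the substantive content sits. The key observation is that commutativity upgrades 2-nil-cleanness to strong 2-nil-cleanness for free: if $a = e + f + w$ with $e,f$ idempotent and $w$ nilpotent in a commutative ring, then $e, f, w$ commute with one another automatically, so $R$ is strongly 2-nil-clean. Moreover, any commutative ring is 2-primal, its prime radical coinciding with its set of nilpotent elements, as noted in the introduction. With these two facts in hand, Theorem 3.3 applies to the commutative (hence 2-primal) strongly 2-nil-clean ring $R$ and delivers the conclusion for every $n$.

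The hard part, such as it is, will not lie in this corollary at all but in the machinery it rests on (Theorem 3.3 and ultimately the Frobenius-normal-form computation of Lemma 3.1). Within the present deduction the only point demanding a moment's care is the passage from 2-nil-clean to strongly 2-nil-clean, which must be justified by commutativity rather than silently assumed; this is immediate. I would therefore present the proof as the single chain ``weakly nil-clean $\Rightarrow$ 2-nil-clean, then apply Corollary 3.4,'' leaving the underlying appeal to Theorem 3.3 implicit in the citation.
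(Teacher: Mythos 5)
Your proof is correct and takes essentially the same route as the paper: the paper's one-line argument reduces directly to Theorem 3.3 by observing that a commutative weakly nil-clean ring is strongly 2-nil-clean and 2-primal, which is exactly the content you unwind when you pass through Corollary 3.4. Routing the citation through Corollary 3.4 rather than Theorem 3.3 is a cosmetic difference, since Corollary 3.4 is itself just Theorem 3.3 plus the commutativity observations you spell out.
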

\begin{proof} As every commutative weakly nil-clean ring is strongly 2-nil-clean 2-primal ring, we obtain the result, by Theorem 3.3.\end{proof}

\begin{exam} Let $m=2^k3^l (k,l\in {\Bbb N})$. Then $M_n ({\Bbb Z}_{m})$ is 2-nil-clean for all $n\in {\Bbb N}$.\end{exam}
\begin{proof} In light of ~\cite[Example 9]{BD}, ${\Bbb Z}_m$ is a commutative weakly nil-clean ring, hence the result by Corollary 3.5.\end{proof}

\begin{lem} (~\cite[Lemma 6.6]{KWZ}) Let $R$ be of bounded index. If $J(R)$ is nil, then $M_n(R)$ is nil for all $n\in {\Bbb N}$.
\end{lem}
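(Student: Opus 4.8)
The plan is to prove the equivalent statement that $M_n(J(R))$ is nil; since the Jacobson radical of a matrix ring satisfies $J(M_n(R))=M_n(J(R))$, this is precisely the assertion about the radical of $M_n(R)$. First I would note that, as $J(R)$ is nil, $J(R)\subseteq N(R)$. Because $R$ is of bounded index there is a fixed $m\in {\Bbb N}$ with $a^m=0$ for every $a\in N(R)$, so in particular $x^m=0$ for all $x\in J(R)$. The whole problem thus reduces to one ring-theoretic fact: if $I$ is a nil ideal in which $x^m=0$ for every $x\in I$, then $M_n(I)$ is nil.

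The crux is to show that such an $I$ is locally nilpotent, i.e. that every finitely generated subring of $I$ is nilpotent, and this is the step I expect to be the main obstacle. I would obtain it by linearizing $x^m=0$: forming the alternating sum $\sum_{\emptyset\neq T\subseteq \{1,\dots,m\}}(-1)^{m-|T|}\big(\sum_{i\in T}x_i\big)^m$ isolates the multilinear component and produces the polynomial identity $\sum_{\sigma\in S_m}x_{\sigma(1)}x_{\sigma(2)}\cdots x_{\sigma(m)}=0$. The point of using this inclusion-exclusion form, rather than dividing $x^m=0$ by $m!$, is that the resulting identity is monic (the coefficient of $x_1x_2\cdots x_m$ is $1$) and valid over any ring with no restriction on characteristic. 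Hence $I$ is a nil ring satisfying a genuine polynomial identity, and by the theorem that a nil PI-ring is locally nilpotent (for example through Shirshov's height theorem), $I$ is locally nilpotent.

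Finally I would pass to matrices. Given $A\in M_n(I)$, let $S$ be the subring of $I$ generated by the $n^2$ entries of $A$. Since $S\subseteq I$ it is nil, and being finitely generated it is nilpotent, say $S^k=0$. A product of $k$ matrices with entries in $S$ has every entry in $S^k$, so $M_n(S)^k\subseteq M_n(S^k)=0$ and in particular $A^k=0$. Thus every matrix in $M_n(I)$ is nilpotent. Taking $I=J(R)$ shows that $M_n(J(R))=J(M_n(R))$ is nil, as required.
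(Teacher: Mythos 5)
Your proof is correct, but note that the paper itself offers no argument for this lemma: it is quoted directly from \cite[Lemma 6.6]{KWZ}, so there is no internal proof to compare against, and your write-up supplies exactly what the paper outsources to a reference. Two observations. First, you silently (and correctly) repaired the statement: as printed, ``$M_n(R)$ is nil'' is impossible for a unital ring; the intended conclusion, and the one actually invoked in Theorem 3.8, is that $M_n(J(R))=J(M_n(R))$ is nil, which is what you prove. Second, the mathematical core of your argument --- linearizing $x^m=0$ by inclusion-exclusion to obtain the monic multilinear identity $\sum_{\sigma\in S_m}x_{\sigma(1)}\cdots x_{\sigma(m)}=0$, then using Shirshov's height theorem to conclude that a finitely generated nil ring satisfying a monic polynomial identity is nilpotent --- is precisely the standard proof of Levitzki's theorem that a nil ring of bounded index is locally nilpotent. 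Your insistence on a monic identity, rather than dividing $x^m=0$ by $m!$, is the right precaution: it is what makes the argument characteristic-free and hence valid for arbitrary rings viewed as ${\Bbb Z}$-algebras. The closing step, where a matrix in $M_n(I)$ is killed by a power of the finitely generated subring $S$ generated by its entries via $M_n(S)^k\subseteq M_n(S^k)=0$, is sound and is the same local-finiteness device the paper uses in Lemma 3.2. If you wished to shorten the argument you could cite Levitzki's theorem (or the locally nilpotent property of nil rings of bounded index) directly in place of the linearization-plus-Shirshov derivation, but as written the proof is complete and correct.
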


\begin{thm} Let $R$ be of bounded index. If $R$ is strongly 2-nil-clean, then $M_n(R)$ is 2-nil-clean for all $n\in {\Bbb N}$.
\end{thm}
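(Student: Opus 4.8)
The plan is to follow the exact template of Theorem 3.3, the only difference being the tool used to establish that the relevant ideal of $M_n(R)$ is nil. First I would invoke Lemma 2.7: since $R$ is strongly 2-nil-clean, $J(R)$ is nil and the quotient $R/J(R)$ is tripotent. Tripotence of $R/J(R)$, together with Lemma 3.2, immediately yields that $M_n(R/J(R))$ is 2-nil-clean for every $n\in {\Bbb N}$.

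Next I would identify the ideal to factor out. There is a natural ring isomorphism $M_n(R)/M_n(J(R))\cong M_n(R/J(R))$, so $M_n(J(R))$ is an ideal of $M_n(R)$ whose quotient is already known to be 2-nil-clean. By Theorem 2.3 it therefore suffices to show that $M_n(J(R))$ is a \emph{nil} ideal of $M_n(R)$.

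This last point is where the bounded-index hypothesis enters, replacing the 2-primal argument of Theorem 3.3. In that theorem the nilpotence of $M_n(J(R))$ followed from the identifications $J(R)=P(R)$ and $P(M_n(R))=M_n(P(R))$, which rely on $R$ being 2-primal. Here those identifications are unavailable; instead, $J(R)$ is a nil ring whose index of nilpotency is bounded (because $R$ is of bounded index), and Lemma 3.7 is precisely the statement that the matrix ring over such a nil ring is again nil. Applying it gives that $M_n(J(R))$ is nil, which closes the argument through Theorem 2.3.

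The step I expect to be the main obstacle is exactly this nilpotence of $M_n(J(R))$: nilness is not in general inherited by matrix rings (this is tied to the K\"othe-type problems), so one genuinely needs the bounded-index hypothesis, which forces the nil ring $J(R)$ to be well-behaved enough (e.g.\ locally nilpotent) for its matrix ring to stay nil. Since Lemma 3.7 packages this fact, the whole theorem reduces to assembling Lemma 2.7, Lemma 3.2, Lemma 3.7 and Theorem 2.3 in the order above.
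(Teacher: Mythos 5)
Your proposal is correct and follows essentially the same route as the paper's proof: both combine Lemma 2.7 (so $J(R)$ is nil and $R/J(R)$ is tripotent), Lemma 3.2 (so $M_n(R/J(R))$ is 2-nil-clean), Lemma 3.7 with the bounded-index hypothesis (so $M_n(J(R))$ is nil), and then conclude via the isomorphism $M_n(R)/M_n(J(R))\cong M_n(R/J(R))$ and Theorem 2.3. Your closing remark correctly pinpoints the only place where bounded index is genuinely needed, which the paper leaves implicit.
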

\begin{proof} By virtue of Lemma 3.7, $M_n(J(R))$ is nil. In view of Lemma 2.7, $R/J(R)$ is tripotent. Thus, $M_n(R/J(R))$ is 2-nil-clean, in terms of Lemma 3.2. Since $M_n(R/J(R))/J(M_n(R))\cong M_n(R/J(R))$, according to Theorem 2.3, $M_n(R)$ is 2-nil-clean.\end{proof}

\begin{cor} Let $R$ be a ring, and let $m\in {\Bbb N}$. If $(a-a^3)^m=0$ for all $a\in R$, then $M_n(R)$ is 2-nil-clean for all $n\in {\Bbb N}$.
\end{cor}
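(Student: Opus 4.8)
The plan is to deduce everything from Theorem 3.8, so I would try to show that the single hypothesis $(a-a^3)^m=0$ for all $a\in R$ already forces $R$ to be both of bounded index and strongly 2-nil-clean. The workhorse will be one elementary observation: if $x\in R$ is such that $1-x^2$ is a unit, then since $x$ commutes with $1-x^2$ we have $(x-x^3)^m=x^m(1-x^2)^m=0$, and multiplying by the unit $(1-x^2)^{-m}$ yields $x^m=0$. I expect this cancellation trick to do almost all of the quantitative work.

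First I would establish bounded index. Let $a\in N(R)$. Then $a^2$ is nilpotent, so $1-a^2$ is a unit, and the observation above gives $a^m=0$. Thus every nilpotent element of $R$ has vanishing $m$-th power, so $R$ is of bounded index (with bound $m$). The same observation also settles one half of strong 2-nil-cleanness: by Lemma 2.7 it suffices to check that $J(R)$ is nil and that $R/J(R)$ is tripotent. For the first, each $x\in J(R)$ has $x^2\in J(R)$, so $1-x^2$ is a unit, and again $x^m=0$; hence $J(R)$ is nil.

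For tripotence of $R/J(R)$, note the hypothesis says precisely that $a^3-a\in N(R)$ for every $a\in R$. Here I would invoke \cite[Theorem A.1]{HTY}, exactly as in the proof of Lemma 2.7, to conclude that $N(R)$ forms an ideal of $R$; being a nil ideal, $N(R)\subseteq J(R)$. Consequently $\overline{a}^3=\overline{a}$ in $R/J(R)$ for all $a$, so $R/J(R)$ is tripotent. With both conditions of Lemma 2.7 verified, $R$ is strongly 2-nil-clean, and combining this with bounded index, Theorem 3.8 delivers that $M_n(R)$ is 2-nil-clean for all $n\in{\Bbb N}$.

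I expect the only genuinely non-routine point to be the passage to $R/J(R)$ being tripotent, which rests on $N(R)$ being an ideal; that is the step where \cite[Theorem A.1]{HTY} is indispensable and where I would be most careful. The remaining steps are short unit-cancellation arguments, and the overall difficulty lies chiefly in recognizing that the displayed polynomial identity is exactly strong enough to trigger Theorem 3.8.
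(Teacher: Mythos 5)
Your proposal is correct and follows essentially the same route as the paper's own proof: the unit-cancellation trick $x^m(1-x^2)^m=0\Rightarrow x^m=0$ to get $J(R)$ nil and bounded index, \cite[Theorem A.1]{HTY} to make $N(R)$ an ideal so that $R/J(R)$ is tripotent, then Lemma 2.7 and Theorem 3.8. The only cosmetic difference is that you verify bounded index by noting $a^2\in N(R)$ directly, whereas the paper factors $1-a^2=(1-a)(1+a)$; these are the same observation.
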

\begin{proof} Let $x\in J(R)$. Then $(x-x^3)^m=0$, and so $x^m=0$. This implies that $J(R)$ is nil. In light of ~\cite[Theorem A.1]{HTY}, $N(R)$ forms an ideal of $R$, and so $N(R)\subseteq J(R)$. Hence, $J(R)=N(R)$ is nil. Further, $R/J(R)$ is tripotent. In light of Lemma 2.7, $R$ is strongly 2-nil-clean. If $a^k=0 (k\in {\Bbb N}$, then $1-a,1+a\in U(R)$, and so $1-a^2=(1-a)(1+a)\in U(R)$.
By hypothesis, $a^m(1-a^2)^m=0$. Hence, $a^m=0$, and so $R$ is of bounded index. This complete the proof, by
Theorem 3.8.\end{proof}

A ring $R$ is a 2-Boolean ring provided that $a^2$ is an idempotent for all $a\in R$.

\begin{cor} Let $R$ be a 2-Boolean ring. Then $M_n(R)$ is 2-nil-clean for all $n\in {\Bbb N}$.\end{cor}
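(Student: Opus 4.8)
The plan is to reduce the statement to Corollary 3.9, which already delivers the conclusion once its hypothesis $(a-a^3)^m=0$ is verified for some fixed $m$. First I would unwind the definition of a 2-Boolean ring: the assumption that $a^2$ is idempotent for every $a\in R$ means precisely that $a^4=(a^2)^2=a^2$ holds identically in $R$. This single polynomial identity is what drives the entire argument, and everything else is bookkeeping with exponents.

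Next I would compute the square of $a-a^3$. Since $a$ and $a^3$ are both powers of the same element they commute, so the binomial expansion $(a-a^3)^2=a^2-2a^4+a^6$ is valid with no ambiguity coming from noncommutativity of $R$. The point is then to reduce the high powers using $a^4=a^2$: one obtains $a^6=a^2\cdot a^4=a^2\cdot a^2=a^4=a^2$, whence $(a-a^3)^2=a^2-2a^2+a^2=0$.

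Thus $R$ satisfies $(a-a^3)^2=0$ for all $a\in R$, i.e.\ the hypothesis of Corollary 3.9 holds with $m=2$. Applying that corollary immediately gives that $M_n(R)$ is 2-nil-clean for all $n\in {\Bbb N}$, completing the proof.

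The argument is essentially a one-line identity, so I do not expect a genuine obstacle. The only place requiring care is the exponent bookkeeping in a possibly noncommutative ring: one must note that $a$ commutes with $a^3$ so that the expansion of $(a-a^3)^2$ is legitimate, and check that the reduction $a^6=a^2$ really follows from the defining identity $a^4=a^2$ rather than from any unwarranted commutativity assumption on $R$.
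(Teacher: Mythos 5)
Your proof is correct and takes essentially the same route as the paper: both arguments verify the hypothesis of Corollary 3.9 from the 2-Boolean identity $a^4=a^2$ and then invoke that corollary. In fact your exponent bookkeeping is marginally sharper --- you obtain $(a-a^3)^2=0$ by direct expansion, whereas the paper factors $a-a^3=a(1-a^2)$ and settles for $(a-a^3)^3=0$; either exponent suffices for Corollary 3.9.
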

\begin{proof} Let $a\in R$. Then $a^2=a^4$. Hence, $a^2(1-a^2)=0$. This shows that
$(1-a^2)^2a^2(1-a^2)a=0$, i.e., $(a-a^3)^3=0$. In light of Corollary 3.9, the result follows.\end{proof}

Let $n\geq 2$ be a fixed integer. Following Tominaga and Yaqub, a
ring $R$ is said to be generalized n-like provided that for any
$a,b\in R$, $(ab)^n-ab^n-a^nb+ab=0$ (~\cite{TY}).

\begin{cor} Let $R$ be a generalized 3-like ring. Then $M_n(R)$ is 2-nil-clean for all $n\in {\Bbb N}$.
\end{cor}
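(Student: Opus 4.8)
The plan is to reduce the statement to Corollary 3.9, which already guarantees that $M_n(R)$ is 2-nil-clean whenever $(a-a^3)^m=0$ holds for all $a\in R$ and some fixed $m\in{\Bbb N}$. So I will not attack the matrix ring directly; instead it suffices to extract from the generalized 3-like identity a single uniform statement, namely that every element of the form $a-a^3$ is nilpotent of bounded index, and then hand the problem off to the earlier corollary.

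The first step is to specialize the defining identity $(ab)^3-ab^3-a^3b+ab=0$ by setting $b=a$. Since every term that results is a power of the one element $a$, no commutativity of $R$ is needed for this substitution, and the identity collapses to $a^6-2a^4+a^2=0$ for every $a\in R$. This is the only place where the hypothesis on $R$ is used.

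The key observation is that the polynomial just obtained is precisely a perfect square: expanding $(a-a^3)^2=a^2-2a^4+a^6$ (again legitimate, as $a$ commutes with its own powers) shows that $a^6-2a^4+a^2=(a-a^3)^2$. Hence $(a-a^3)^2=0$ for all $a\in R$, so the hypothesis of Corollary 3.9 is satisfied with $m=2$. Applying Corollary 3.9 then immediately gives that $M_n(R)$ is 2-nil-clean for all $n\in{\Bbb N}$.

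I do not expect a genuine obstacle here. The entire content is the recognition that the single substitution $b=a$ already produces the square $(a-a^3)^2$ rather than merely some polynomial vanishing on $a$; once this is seen, the structural machinery developed earlier (Corollary 3.9, and behind it Theorem 3.8, Lemma 3.7, Lemma 3.2 and Theorem 2.3) does all the remaining work. The only point requiring a moment's care is to keep track that the manipulations involve only powers of a fixed element, so that they are valid in a not-necessarily-commutative ring.
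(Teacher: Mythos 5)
Your proposal is correct and follows exactly the paper's route: setting $b=a$ in the generalized 3-like identity gives $a^6-2a^4+a^2=(a-a^3)^2=0$, and Corollary 3.9 finishes the argument. The paper's proof states $(a-a^3)^2=0$ without computation; you merely make that step explicit.
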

\begin{proof} Let $a\in R$. Then $(a-a^3)^2=0$, hence the result by Corollary 3.9.\end{proof}

Recall that a ring $R$ is strongly SIT-ring if every element in $R$ is the sum of an idempotent and a tripotent that commute ~(cf. ~\cite{Y}). We have

\begin{cor} Let $R$ be a strongly SIT-ring. Then $M_n(R)$ is 2-nil-clean for all $n\in {\Bbb N}$.
\end{cor}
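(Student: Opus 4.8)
The plan is to verify the hypothesis of Corollary 3.9, namely to exhibit one integer $m$ with $(a-a^3)^m=0$ for every $a\in R$. Given $a\in R$, write $a=e+t$ with $e^2=e$, $t^3=t$ and $et=te$, and compute inside the commutative subring generated by $e$ and $t$. There $a^3-a=3e(t+t^2)$ and $(t+t^2)^2=2(t+t^2)$, so squaring gives the key identity $(a-a^3)^2=-6(a-a^3)$, valid for all $a\in R$. Setting $b=a-a^3$, an immediate induction yields $b^{\,n}=(-6)^{\,n-1}b$; hence the entire problem collapses to showing that $6$ is nilpotent in $R$, for then $b^{\,4}=(-6)^3b=0$ once $6^3=0$.

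I would prove $6^3=0$ in two steps. First, $6\in J(R)$. Strongly SIT-rings are closed under homomorphic images, so every primitive quotient $S=R/P$ is again strongly SIT and, being primitive, is prime; in particular its centre $Z(S)$ is an integral domain. Since the identity $(x-x^3)^2=-6(x-x^3)$ passes to $S$, the fixed monic degree-$6$ polynomial $(\lambda-\lambda^3)^2+6(\lambda-\lambda^3)$ vanishes at every $\lambda\in Z(S)$; as a nonzero polynomial has at most $6$ roots, $Z(S)$ is a finite domain, hence a field of order at most $6$, and a short check rules out orders $4$ and $5$. Thus $Z(S)\cong{\Bbb Z}_2$ or ${\Bbb Z}_3$, so $6\cdot 1_S=0$, i.e. $6\in P$. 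Intersecting over all primitive ideals gives $6\in J(R)$.

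Second, I compute inside $J(R)$. For $x\in J(R)$ with $x=e+t$ as above, a short manipulation (multiply $x$ on the right by $e(1-t^2)$ and by $(1-e)t^2$, using $t^3=t$ and $et=te$, squaring in the second case) shows that the idempotents $e(1-t^2)$ and $(1-e)t^2$ both lie in $J(R)$, hence vanish; this forces $e=t^2$, so $x=t+t^2$ and therefore $x^2=2x$. Applying this with $x=6\in J(R)$ gives $36=12$, i.e. $24=0$, and then $6^3=2^2\cdot 6=24=0$. Combining with the first step, $(a-a^3)^4=b^4=(-6)^3b=0$ for all $a\in R$, and Corollary 3.9 applies with $m=4$ to give that $M_n(R)$ is 2-nil-clean for every $n\in{\Bbb N}$.

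The crux is the nilpotency of $6$, and both halves require real input. The inclusion $6\in J(R)$ rests on passing to primitive (prime) quotients and reading the ring-theoretic identity off on the centre, since $R$ need not be $2$-primal or of bounded index; the relation $x^2=2x$ on $J(R)$ rests on the absence of nonzero idempotents in $J(R)$, which is what converts the SIT-decomposition of an element of $J(R)$ into the rigid form $x=t+t^2$. It is worth stressing that I do not need the (much stronger, and here unavailable) assertion that $J(R)$ is nil: only the single central element $6$ must be shown nilpotent, and the relation $x^2=2x$ on $J(R)$ delivers exactly $6^3=0$.
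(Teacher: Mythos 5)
Your proof is correct---every step checks out---but it reaches the conclusion by a genuinely different route than the paper does. The paper's entire argument is a citation: by \cite[Theorem 3.10]{Y}, every element of a strongly SIT-ring satisfies $a^6=a^4$, hence $a^4(1-a^2)=0$, hence $(a-a^3)^5=a\cdot a^4(1-a^2)\cdot (1-a^2)^4=0$, and Corollary 3.9 applies with $m=5$. You instead prove the required uniform nilpotency from first principles: the commuting decomposition $a=e+t$ yields $(a-a^3)^2=-6(a-a^3)$, reducing everything to the nilpotency of $6$; you then place $6$ in $J(R)$ by passing to primitive quotients and trapping their centres (integral domains, since primitive rings are prime) among the roots of a fixed monic degree-$6$ polynomial; and finally you exploit the fact that $J(R)$ contains no nonzero idempotents to force $e=t^2$, hence $x^2=2x$, for every $x\in J(R)$, which at $x=6$ gives $24=0$, so $6^3=0$ and $(a-a^3)^4=0$ for all $a$. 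I verified the computations: $xe(1-t^2)=e(1-t^2)$ and $x(1-e)t^2=(1-e)t$ do place those idempotents (after squaring in the second case) in $J(R)$, and the field orders $4$ and $5$ do fail the identity $(x-x^3)^2=-6(x-x^3)$, leaving only characteristic $2$ or $3$. What the paper's route buys is brevity, at the cost of leaning on an external structure theorem. What your route buys is self-containedness, a marginally sharper exponent ($m=4$ rather than $m=5$), and the by-product that $24=0$ in every strongly SIT-ring---which in fact recovers the cited identity, since $a^6-a^4=24\,e(t+t^2)$ under the decomposition $a=e+t$.
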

\begin{proof} Let $R$ be a strongly SIT-ring, and let $a\in R$. In view of~\cite[Theorem 3.10]{Y}, we see that $a^6=a^4$; hence, $a^4(1-a^2)=0$. This implies that $(a-a^3)^5=0$. In light of Corollary 3.9, we complete the proof.\end{proof}

\vskip10mm

\end{document}